\theoremstyle{plain}
\newtheorem{theorem}{Theorem}[section]
\newtheorem{lemma}[theorem]{Lemma}
\newtheorem{proposition}[theorem]{Proposition}
\newtheorem*{theorem*}{Theorem}
\theoremstyle{definition}
\newtheorem{example}[theorem]{Example}
\theoremstyle{remark}
\newtheorem{remark}[theorem]{Remark}
\renewcommand{\O}{\mathcal{O}}
\def\Z{{\mathds Z}}
\def\Q{{\mathds Q}}
\def\V{{\mathcal V}}
\def\QQ{{\mathcal Q}}
\def\D{{\mathfrak{D}}}
\def\E{{\mathscr{E}}}
\def\T{{\mathbb{T}}}
\def\ord{{\rm ord}}
\def\P{{\mathds{P}}}
\def\F{{\mathcal{F}}}
\def\Ray{{\mathrm{Ray}}}
\def\Vert{{\mathrm{Vert}}}
\def\Cl{{\mathrm{Cl}}}
\def\div{{\mathrm{div}}}
\def\XX{{\mathscr{X}}}
\def\SS{{\mathcal{S}}}
\address{Kevin Langlois, Mathematisches Institut, Heinrich Heine Universit\"{a}t, 40225 D\"{u}sseldorf, Germany}
\email{langlois.kevin18@gmail.com}
\address{Ronan Terpereau, Max Planck Institut f\"{u}r Mathematik, Vivatsgasse 7, 53111 Bonn, Germany}
\email{rterpere@mpim-bonn.mpg.de}
\subjclass[2010]{14L30, 14M27, 14M25.}
\keywords{Action of algebraic groups, Luna--Vust theory.}
\begin{document}

\title[The Cox ring 
of a complexity-one horospherical variety]{The Cox ring of a complexity-one \\ horospherical variety}
\author{Kevin Langlois}
\author{Ronan Terpereau}

\begin{abstract}  
Cox rings are intrinsic objects naturally generalizing homogeneous coordinate rings of projective spaces. A complexity-one horospherical variety is 
a normal variety equipped with a reductive group action whose general orbit is horospherical and of codimension one.
In this note, we provide a presentation by generators and relations for the Cox rings of complete rational complexity-one
horospherical varieties.   
\end{abstract}  

\maketitle

\setcounter{tocdepth}{1}
\tableofcontents

\section*{Introduction}

All algebraic varieties and algebraic groups considered in this article are defined over an algebraically closed field $k$ of characteristic zero.\\  

Let $G$ be a connected simply-connected reductive algebraic group (i.e., a direct product of a torus and a connected simply-connected semisimple group), and let $H \subseteq G$ be a closed subgroup. 
The homogeneous space $G/H$ is called \emph{horospherical} if $H$ contains a maximal unipotent subgroup of $G$. 
Geometrically, the homogeneous space $G/H$ may be  realized as the total space of a principal $\T$-bundle over the flag variety $G/P$, where $P=N_G(H)$  is the parabolic subgroup normalizing $H$ and $\T$ is the algebraic torus $P/H$. 

In this paper, we consider a specific class of $G$-varieties: the \emph{complexity-one horospherical $G$-varieties}, that is, the normal $G$-varieties whose general orbit is horospherical and of codimension one. 
From the Luna--Vust theory \cite{LV83}, there is a combinatorial description of such varieties (see \cite{Tim97}, \cite[Ch. 16]{Tim11}, and \cite[\S 1]{LT16}) which is quite similar to the classical one of torus embeddings (see for instance \cite{Ful93}). The geometry of complexity-one horospherical varieties has been studied in \cite{LT16}; see \cite[Th. 2.5 and 2.6]{LT16} for a smoothness criterion,  \cite[Cor. 2.12]{LT16} for a description of the class group, and \cite[Th. 2.18]{LT16} for a description of the canonical class. Also the stringy invariants of these varieties have been determined in \cite{LPR}.

An important issue for the theory of complexity-one horospherical varieties is to describe them in terms of equations via \lq explicit coordinates\rq. In the special framework of torus actions of complexity one, this program was achieved in some cases via the theory of Cox rings (see \cite{HS10, HH13}).

Let $X$ be a normal variety whose class group $\Cl(X)$ is of finite type and such that $\Gamma(X,{\O}_{X}^{\times})= k^\times$. As a graded $k$-vector space, the \emph{Cox ring} of $X$ is defined as
$$R(X) = \bigoplus_{[D]\in \Cl(X)}\Gamma(X, \O_{X}(D)).$$
The vector space $R(X)$ can be equipped with a multiplicative law making $R(X)$ a $\Cl(X)$-graded algebra over $k$; see \cite[\S 1.4]{ADHL15} for details. Let us note that any projective $\Q$-factorial normal variety $X$, with finitely generated class group $\Cl(X)$, is completely determined (up to isomorphism) by the data of its Cox ring, as a $\Cl(X)$-graded algebra, and an ample class (see \cite[\S 1.6.3]{ADHL15}).

There are deep connections between Cox rings, invariant theory, and the minimal model program (see \cite{McK10} for a survey). Cox rings also appear in the classification of Fano varieties (see for instance \cite{Her14,Sue14} for complexity-one torus actions), and they have applications in arithmetic geometry to the study of rational points of algebraic varieties (see \cite[\S 6]{ADHL15} for an overview).

The Cox ring has been computed for several important classes 
of algebraic varieties with reductive group action; see \cite{KR87} for flag varieties, \cite{Cox95} for toric varieties, and more generally \cite{Bri07} and \cite{Gag14} for spherical varieties (complexity zero case).
A description of the Cox ring for algebraic varieties with torus action is given in \cite{HS10}.

The purpose of this article is to describe the Cox ring of a new class of algebraic varieties with reductive group action, namely our main result (Theorem \ref{cox}) is a description of the Cox ring of any complete rational complexity-one horospherical variety by generators and relations. 
Note that, by \cite[Cor. 2.12]{LT16}, the class group of a complexity-one horospherical variety is finitely generated if and only if the variety is rational. The completeness assumption however is only by convenience. 

The proof of Theorem \ref{cox} is based on the fact that a complexity-one horospherical variety $X$ is naturally equipped with an action of the algebraic torus $\T$; see Lemma \ref{Taction}. We first describe the $\T$-action on $X$ in terms of divisorial fans by adapting the results of Altmann--Kiritchenko--Petersen in \cite{AKP15} obtained for spherical varieties of minimal rank; see Proposition \ref{Tvariety}. Then our result follows from \cite[Th. 4.8]{HS10} which describes Cox rings of $\T$-varieties in terms of their divisorial fans.

\section{Brief overview of the combinatorics}
Let us introduce the necessary background on Luna--Vust theory required to express and prove Theorem \ref{cox}. Here we give some geometric ideas how it works out; we refer to \cite[Ch. 16]{Tim11} and \cite[\S 1]{LT16} for precise statements.
The equivariant birational type of a rational complexity-one horospherical $G$-variety $X$ has a simple description. Indeed, by \cite[Satz 2.2]{Kno90}, there exists a $G$-equivariant birational map 
\begin{equation}\label{eqr}
\psi: X \dashrightarrow \P^1 \times G/H,
\end{equation}
where $G$ acts by left multiplication on the horospherical homogeneous space $G/H$ and trivially on the projective line $\P^1$.

The combinatorics introduced thereafter are classifying objects for a specific category: the \emph{category of $G$-models} of $\P^1 \times G/H$, whose objects are pairs $(X, \psi)$, where $X$
is a normal $G$-variety and $\psi$ is a $G$-equivariant birational map as in \eqref{eqr}.
Morphisms $(X_{1},\psi_{1})\rightarrow (X_{2}, \psi_{2})$ in this category are $G$-morphisms $f:X_{1}\rightarrow X_{2}$ such that $\psi_2\circ f = \psi_1$. In the following we will omit the \emph{base rational map} $\psi$ and write $X$ for $(X,\psi)$. 

Let $X$ be  a complete $G$-model of $\P^1 \times G/H$.
Fix a Borel subgroup $B \subseteq G$ whose unipotent radical is contained in $H$. The $B$-stable prime divisors of $X$ which are not $G$-stable are called \emph{colors} of $X$; the finite set of colors of $X$ is denoted by $\F_X$. This set is 
in one-to-one correspondence with the set of Schubert divisors of $G/P$,
where $P$ is the normalizer of $H$ in $G$. 
By \cite[Prop. 2.9]{LT16}, there exists a unique proper morphism of $G$-models
$$\pi: \tilde{X} \to X,$$ called \emph{discoloration} of $X$, such that: the colors of $\tilde{X}$ do not contain a $G$-orbit; and for any $G$-stable irreducible closed subvariety $Z\subseteq \tilde{X}$, the sets of essential $G$-invariant 
valuations of the field of rational functions $k(X)$ describing the local rings of $Z$ and $\pi(Z)$ are the same (see \cite[\S 8]{LV83}).  
The morphism $\pi$ is a resolution
of the indeterminacy locus of the $G$-equivariant dominant rational map $\psi':X \dashrightarrow  G/P$
induced by $\psi$, that is, $\psi' \circ \pi$ is a morphism.
The preimage of $P/P$ by $\psi' \circ \pi$ defines a $P$-variety $Y \subseteq \tilde{X}$ which
turns out to be a normal $\T$-variety with general orbit of codimension one and $\T$-isomorphic to $\T$. Moreover, the natural morphism
$$G\times Y\rightarrow \tilde{X},\,\,(g,x)\mapsto g\cdot x$$
induces a $G$-isomorphism between the parabolic induction $G\times^{P}Y$ of $Y$ and the $G$-variety $\tilde{X}$.
Thus, $X$ is determined by the $\T$-variety $Y$ and by some subsets of $\F_X$ corresponding to the $G$-stable subvarieties that $\pi$ contracts.  \\

Since the combinatorial datum of $Y$ will be part of the one of $X$, we make a short digression to explain the combinatorial description of the $\T$-varieties following Altmann--Hausen--S{\"u}ss; see \cite{AHS08} for details.  
Let $\XX$ be a normal affine $\T$-variety (with $\T$ acting faithfully on $\XX$), let $M$ be the group of characters of $\T$, and let $M_\Q=\Q\otimes_\Z M $. The coordinate ring $k[\XX]=\Gamma(\XX,\O_\XX)$ is described by a piecewise linear map, called \emph{polyhedral divisor} \cite[Def. 2.7]{AH06}, and defined as 
\begin{equation}
\D:   \omega  \to  {\rm CaDiv}_\Q(V), \; \; \;
 m  \mapsto  \hspace{-5mm} \sum_{\substack{Z \subseteq V \\ Z  \text{ prime divisor}}}  \hspace{-5mm} \min \langle \D_Z, m \rangle \cdot Z,
\end{equation}
where $\omega \subseteq M_\Q$ is a polyhedral cone spanning $M_\Q$, each $\D_Z$ is a prescribed polyhedron lying in the dual of $M_\Q$, 
and ${\rm CaDiv}_\Q(V)= \Q \otimes_\Z {\rm CaDiv}(V)$ is the $\Q$-vector space generated by the group of Cartier divisors of a certain normal variety $V$ obtained as a rational quotient of $\XX$ by $\T$. The $\T$-action on $\XX$ induces an $M$-grading of algebra on $k[\XX]$ with weight cone $\omega$. Moreover, each graded piece of $k[\XX]$ corresponding to $m\in \omega\cap M$ identifies with $\Gamma(V, \mathcal{O}_{V}(\D(m)));$
see \cite[Th. 3.1 and 3.4]{AH06} for a precise statement.

As $\D$ is determined by the $\D_Z$'s, one usually denotes 
$$ \D= \sum_{Z \subseteq V} \D_Z \cdot Z.$$
For all but a finite numbers of prime divisors $Z \subseteq V$, the $\D_Z$'s are equal to the dual polyhedral cone $\sigma=\omega^\vee$; the latter is called the \emph{tail} of $\D$.
We follow the conventions of \cite[\S 2]{AHS08} and will say that $\D$ is defined over a compactification  $\bar{V}$ of $V$ by adding empty coefficients on the boundary if necessary.

In the general case where the normal $\T$-variety $\XX$ is not affine, there is an open covering of $\XX$ by affine $\T$-varieties \cite[Cor. 3.2]{Sum74}. Therefore, $\XX$ is described by a finite set $\SS$ of polyhedral divisors defined over a common complete variety $\bar{V}$ and satisfying certain compatibility conditions \cite[Th. 5.6]{AHS08}; the set $\SS$ is called a \emph{divisorial fan} \cite[Def. 5.1]{AHS08}. 
Let us note that, for any prime divisor $Z\subseteq \bar{V}$, the set of all coefficients $\D_{Z}$ when $\D$ runs through $\SS$ defines a polyhedral subdivision $\SS_{Z}$ of $M_\Q$; it is called a \emph{slice} of $\SS$ over $Z$. The \emph{support} of $\SS$ is the set of prime divisors of $\bar{V}$ where the slices
are non-empty and non trivial. 
A \emph{vertex} of a slice $\SS_{Z}$ is a vertex of one of its elements.\\

We now return to the combinatorial description of the complexity-one horospherical $G$-variety $X$. Let $\E'$ be a divisorial fan over $\P^{1}$ describing the $\T$-variety $Y$ defined earlier. Each $\D \in \E'$ defines a dense open subset in $Y$ and a $G$-stable dense open subset in $\tilde{X}$ via parabolic induction;
we denote the latter by $\tilde{X}(\D)$. 
Let $\F\subseteq \F_{X}$ be the set of colors of $X$ containing a $G$-orbit which is the image by $\pi$ of a $G$-orbit in the exceptional locus of $\pi_{|\tilde{X}(\D)}$. 
The pair $(\D,\F)$ is called a \emph{colored polyhedral divisor} and describes
the $G$-stable dense open subset $X(\D,\F):=\pi(\tilde{X}(\D))$.

The finite set $\E$ of colored polyhedral divisors $(\D,\F)$ obtained from $\E'$ and $\pi$ as above 
is called a \emph{colored divisorial fan} associated with $X$.
This set constitutes the combinatorial counterpart of $X$ as explained in \cite[\S 1]{LT16}. Many geometric properties of $X$ are reflected in the combinatorial object $\E$; we refer for instance to \cite[Th. 1.10, 2.5, and 2.6]{LT16} for characterizations of completeness and smoothness properties.

In \cite[\S 2.3]{LT16} a description of the 
prime $G$-divisors of $X$ in terms of $\E$ is given. They are separated into two sets depending on whether they have an open $G$-orbit or not. The prime $G$-divisors having an open $G$-orbit are in bijection with the set $\Vert(\E)$, and the other prime $G$-divisors are in bijection with the set $\Ray(\E)$. 
The elements of $\Vert(\E)$ are pairs $(y,v)$, where $y\in \P^{1}$ and $v$ is a vertex of the slice $\E_{y}$.  
The elements of $\Ray(\E)$ are certain extremal rays of tails of elements of $\E$ satisfying additional combinatorial conditions; see \cite[\S 2.3.1]{LT16} for details.

\section{Statement of the main result}
Let $X$ be a complete rational complexity-one horospherical $G$-variety with general orbit $G$-isomorphic to the homogeneous space $G/H$ and
colored divisorial fan $\E$. The main result of this paper is the following.

\begin{theorem} \label{cox}
Let $\{y_{1}, \ldots, y_{r}\}\subseteq \P^{1}$ be the support of the colored divisorial fan $\E$. The Cox ring of the complexity-one horospherical variety $X$
is isomorphic to
$$ R(G/P) \otimes_k  k \left[S_{\rho} ;\, \rho\in\Ray(\E) \right] \otimes_{k}  k\left[T_0,T_1,T_{(y_{i}, v)} \,;\, (y_{i}, v)\in \Vert(\E)\right]/I,$$
where $I$ is the ideal generated by the elements 
$$-\alpha_i T_0-\beta_i T_1 + \prod_{v \text{ vertex of } \E_{y_i}}  T_{(y_{i},v)}^{\mu(v)}$$ 
for $1\leq i\leq r$, with $y_i=[\alpha_i:\beta_i]$, and $\mu(v)$ is the smallest integer $d \in \Z_{>0}$ such that $dv$ is a lattice vector.
Moreover, the $\Cl(X)$-degree of the variables $S_\rho$ and $T_{(y_i,v)}$ is given by the class of the prime $G$-divisors corresponding to $\rho$ and $(y_i,v)$ respectively, and the $\Cl(X)$-grading on $R(G/P)$ 
is obtained by identifying colors of $X$ and Schubert divisors of $G/P$.
\end{theorem}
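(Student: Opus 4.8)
The plan is to reduce the computation to the case of the associated $\T$-variety and then invoke the general machinery of Hausen--S\"uss \cite[Th. 4.8]{HS10}. First I would recall from Lemma \ref{Taction} that $X$ carries a natural $\T$-action, and from Proposition \ref{Tvariety} (to be established by adapting \cite{AKP15}) an explicit divisorial fan $\SS$ over $\P^1$ describing a suitable $\T$-stable open piece of $X$ — concretely, the $\T$-variety $Y$ sitting inside the discoloration $\tilde X$, whose parabolic induction recovers $\tilde X$, together with the bookkeeping of colors that records how $\pi\colon\tilde X\to X$ contracts $G$-orbits. The key structural input is that, as a $\T$-variety, $X$ is a \lq parabolic induction\rq\ of $Y$, so $k(X)=k(G/P)(Y)$ and the divisors of $X$ fall into three families: the colors (in bijection with Schubert divisors of $G/P$, contributing the factor $R(G/P)$), the vertical $G$-divisors with open $G$-orbit (in bijection with $\Vert(\E)$, indexed by pairs $(y_i,v)$), and the remaining $G$-divisors (in bijection with $\Ray(\E)$, indexed by the rays $\rho$).

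Next I would make the class-group dictionary precise: using \cite[Cor. 2.12]{LT16} for $\Cl(X)$ and its generators, I would check that the relations among the classes of these prime divisors are exactly the principal divisors of the rational functions pulled back from $\P^1$ via the quotient map $Y\dashrightarrow\P^1$. A rational function on $\P^1$ with divisor supported on $\{y_1,\dots,y_r\}$ of the form $(\alpha_i x_1-\beta_i x_0)/(\alpha_j x_1 - \beta_j x_0)$ pulls back to a rational function on $X$ whose divisor, computed via the order-of-vanishing formulae for $\T$-varieties over a curve (the coefficient $\mu(v)$ being the denominator clearing the vertex $v$ to a lattice point), reads off as $\sum_v \mu(v)D_{(y_i,v)} - \sum_v \mu(v)D_{(y_j,v)}$ plus contributions from the two \lq torus-invariant\rq\ points $T_0,T_1$ on $\P^1$; this is precisely what forces the trinomial relations $-\alpha_i T_0 - \beta_i T_1 + \prod_v T_{(y_i,v)}^{\mu(v)}$ in the ideal $I$. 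Then \cite[Th. 4.8]{HS10}, applied to the divisorial fan over $\P^1$, yields a presentation of $R$ of the displayed shape once one identifies the $\T$-variety $Y$ correctly; the tensor factor $R(G/P)=k[G]^{U}$ (the multicone over $G/P$, cf. \cite{KR87}) enters through the parabolic-induction structure, contributing its own variables graded by the Schubert/color classes, and the $\Ray(\E)$-variables $S_\rho$ enter freely because the corresponding divisors impose no relations (they lie over generic points of $\P^1$).

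Concretely the steps are: (1) fix $\E$, the discoloration $\pi$, the $\T$-variety $Y$, and the divisorial fan $\SS$ over $\P^1$ underlying it; (2) identify $\Cl(X)$ and a generating set of divisor classes via \cite[Cor. 2.12]{LT16}, matching $\Vert(\E)$, $\Ray(\E)$, and $\F_X$ to prime divisors; (3) write down the principal divisors coming from $k(\P^1)^\times$ and verify they generate the relation lattice, producing the trinomials of $I$ with the coefficients $\mu(v)$; (4) feed the divisorial-fan data of the $\T$-variety $X$ into \cite[Th. 4.8]{HS10} to get the ring structure, and (5) incorporate the parabolic-induction factor $R(G/P)$, checking that the $G$-action is compatible and that the grading on $R(G/P)$ is the one coming from Schubert divisors. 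I expect the main obstacle to be step (5) together with the compatibility in step (1): namely, verifying rigorously that the Cox ring of the parabolically induced $G$-variety $\tilde X = G\times^P Y$ is $R(G/P)\otimes_k R(Y)$ as a graded algebra (with the correct $\Cl$-grading), and that passing from $\tilde X$ to $X$ along the discoloration $\pi$ does not change the Cox ring — i.e.\ that discoloration is an isomorphism in codimension one compatible with the combinatorics of $\E$, so that $R(X)=R(\tilde X)$. Once these identifications are in place, the theorem follows by assembling \cite[Th. 4.8]{HS10}, \cite[Cor. 2.12]{LT16}, and the description of $R(G/P)$ in \cite{KR87}.
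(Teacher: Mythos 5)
Your route differs from the paper's in an essential way, and the difference is where the gap lies. The paper does not compute $R(Y)$ over $\P^1$ and then bolt on $R(G/P)$ via parabolic induction; instead, Proposition \ref{Tvariety} exhibits a divisorial fan for $X$ \emph{itself} as a $\T$-variety over the base $\P^1\times G/P$, with coefficients $\varrho(D)+\sigma$ along the divisors $\P^1\times D$ and $\D_s$ along $\{s\}\times G/P$. Then \cite[Th.\ 4.8]{HS10} is applied once, directly to $X$: the factor $R(G/P)$ enters only through $R(\P^1\times G/P)\cong k[T_0,T_1]\otimes_k R(G/P)$, and the slices over the $\P^1\times D$ contribute no new generators or relations because their vertices $\varrho(D)$ are lattice points. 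This sidesteps both of the identifications you flag as your "main obstacle."

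The concrete gap is your claim, needed in step (5), that the discoloration $\pi\colon\tilde X\to X$ is an isomorphism in codimension one, so that $R(X)=R(\tilde X)$. This is false in general: discoloration typically \emph{creates} $G$-stable divisors that $\pi$ contracts onto lower-dimensional $G$-orbits of $X$. For instance, take $G=\SL_2$, $H$ a maximal unipotent subgroup, and $X=\P^1\times\P(k\oplus k^2)$ with $G$ acting trivially on $\P^1$ and linearly on the second factor; this is a complete rational complexity-one horospherical variety. Its unique color contains the $G$-fixed points $\{y\}\times\{0\}$, and the discoloration is $\tilde X=\P^1\times\mathrm{Bl}_0\,\P(k\oplus k^2)$, which has class group of rank one more than $X$; consequently $R(\tilde X)\not\cong R(X)$. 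This is exactly why $\Ray(\E)$ consists only of those extremal rays satisfying the extra combinatorial conditions of \cite[\S 2.3.1]{LT16} (rays absorbed by colors are excluded), and why the paper works with the colored charts $X_0^w(\D,\F)$ of $X$ downstairs rather than with $\tilde X$. Relatedly, your identity $R(G\times^P Y)\cong R(G/P)\otimes_k R(Y)$ as graded algebras is not automatic for a twisted fibration (already for $\P(\O\oplus\O(-1))\to\P^1$ the grading is not the product grading) and would need its own argument; in the paper this issue never arises because the Schubert-divisor classes are absorbed into the grading of $R(\P^1\times G/P)$ by \cite[Prop.\ 4.7]{HS10}. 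Your steps (2)--(3), matching classes and reading off the trinomial relations from principal divisors of functions on $\P^1$, are consistent with what \cite[Th.\ 4.8]{HS10} produces, but the argument as proposed does not reach $R(X)$.
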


\begin{remark}\footnote{We are grateful to the referee for suggesting this remark.}
In the case where $r\geq 2$ the variables $T_{0}, T_{1}$ can be eliminated and the presentation of the Cox ring $R(X)$ in Theorem \ref{cox} takes the following form. Denote by $\Phi$ a basis of the $(r-2)$-dimensional vector space 
$$\left\{(\lambda_{1},\ldots, \lambda_{r})\in k^{r}\,\vrule \,\sum_{i=1}^{r}\lambda_{i}(\alpha_{i}, \beta_{i})= 0\right\}$$ and by $F_{i}$ the monomial $\prod_{v}  T_{(y_{i},v)}^{\mu(v)}$. Then $R(X)$ is isomorphic to
$$ R(G/P) \otimes_k  k \left[S_{\rho} ;\, \rho\in\Ray(\E) \right] \otimes_{k}  k\left[T_{(y_{i}, v)} \,;\, (y_{i}, v)\in \Vert(\E)\right]/J,$$
where $J$ is the ideal generated by the elements $\sum_{i=1}^{r}\gamma_{i}F_{i}$ for $(\gamma_{1},\ldots, \gamma_{r})\in \Phi$. 
\end{remark}

The reader is referred to \cite[\S 3.2.3]{ADHL15} for a presentation by generators and relations of the Cox ring of a flag variety. Note that our result implies that the Cox ring of a complete rational complexity-one horospherical variety is finitely generated.

\begin{example}
Let 
$G={\rm SL}_3$ and let $H$ be a maximal unipotent subgroup of $G$. The parabolic subgroup $P=N_G(H)$ is a Borel subgroup of $G$ and $\T$ is a $2$-dimensional torus; in particular $M \cong \Z^2$.
 \\
\begin{center}
 \begin{tabular}{ccccccc}
\begin{tikzpicture} 
\fill[gray!55] [-, thin] (0,0) -- (0,1) -- (-1,1) -- (-1,0) -- cycle;
\fill[gray!15] [-, thin] (0,0) -- (0,1) -- (1,1) -- (1,0)  -- cycle;
\fill[gray!55] [-, thin] (0,0) -- (1,0) -- (1,-1) -- (0,-1) -- cycle;
\fill[gray!15] [-, thin] (0,0) -- (0,-1) -- (-1,-1) -- (-1,0) -- cycle;
\draw (0,0) node[below left]{{\tiny $(0,0)$}} node{{\tiny $\bullet$}};
\draw (0.8,0.8)  node{};
\draw (0.8,-0.8)  node{};
\draw (-0.8,-0.8)  node{};
\draw (-0.8,0.8)  node{};
\draw [-, thick] (0,0) -- (0,1);
\draw [-, thick] (0,0) -- (-1,0);
\draw [-, thick] (0,0) -- (0,-1);
\draw [-, thick] (0,0) -- (1,0);
\draw [->] (0,0) -- (0,0.5);
\draw [->] (0,0) -- (0.5,0);
\draw (0.5,0)  node[above]{{\tiny $e_1$}};
\draw (0,0.5)  node[right]{{\tiny $e_2$}};
\draw (0.25,0) node{{\tiny \textbf{O}}};
\end{tikzpicture}& & 
\begin{tikzpicture} 
\fill[gray!55] [-, thin] (-0.5,0.5) -- (-0.5,1) -- (-1,1) -- (-1,0.5) -- cycle;
\fill[gray!15] [-, thin] (-0.5,0.5) -- (-0.5,1) -- (1,1) -- (1,-0.25) -- (0.25,-0.25) -- cycle;
\fill[gray!55] [-, thin] (0.25,-0.25) -- (1,-0.25) -- (1,-1) -- (0.25,-1) -- cycle;
\fill[gray!15] [-, thin] (0.25,-0.25) -- (0.25,-1) -- (-1,-1) -- (-1,0.5) -- (-0.5,0.5)-- cycle;
\draw (0,0) node[above right]{{\tiny $(0,0)$}} node{{\tiny $\bullet$}};
\draw (-0.5,0.5) node[ above right]{{\tiny $(\frac{-1}{2},\frac{1}{2})$}} node{{\tiny $\bullet$}};
\draw (0.25,-0.25) node[below left]{{\tiny $(\frac{1}{4},\frac{-1}{4})$}} node{{\tiny $\bullet$}};
\draw (0.8,0.8)  node{};
\draw (0.8,-0.8)  node{};
\draw (-0.8,-0.8)  node{};
\draw (-0.8,0.8)  node{};
\draw [-, thick] (-0.5,0.5) -- (0.25,-0.25);
\draw [-, thick] (-0.5,0.5) -- (-0.5,1);
\draw [-, thick] (-0.5,0.5) -- (-1,0.5);
\draw [-, thick] (0.25,-0.25) -- (1,-0.25);
\draw [-, thick] (0.25,-0.25) -- (0.25,-1);
\end{tikzpicture}&  &
\begin{tikzpicture} 
\fill[gray!55] [-, thin] (0,0.08) -- (0,1) -- (-1,1) -- (-1,0.08) -- cycle;
\fill [gray!15] [-, thin] (0,0.08) -- (0,1) -- (1,1) -- (1,0.08)  -- cycle;
\fill[gray!55] [-, thin] (0,0.08) -- (1,0.08) -- (1,-1) -- (0,-1) -- cycle;
\fill[gray!15] [-, thin] (0,0.08) -- (0,-1) -- (-1,-1) -- (-1,0.08) -- cycle;
\draw (0,0.08) node[above right]{{\tiny $(0,\frac{1}{9})$}} node{{\tiny $\bullet$}};
\draw (0.8,0.8)  node{};
\draw (0.8,-0.8)  node{};
\draw (-0.8,-0.8)  node{};
\draw (-0.8,0.8)  node{};
\draw [-, thick] (0,0.08) -- (0,1);
\draw [-, thick] (0,0.08) -- (-1,0.08);
\draw [-, thick] (0,0.08) -- (0,-1);
\draw [-, thick] (0,0.08) -- (1,0.08);
\end{tikzpicture} &  &
\begin{tikzpicture} 
\fill[gray!55] [-, thin] (0,0.08) -- (0,1) -- (-1,1) -- (-1,0.08) -- cycle;
\fill [gray!15] [-, thin] (0,0.08) -- (0,1) -- (1,1) -- (1,0.08)  -- cycle;
\fill[gray!55] [-, thin] (0,0.08) -- (1,0.08) -- (1,-1) -- (0,-1) -- cycle;
\fill[gray!15] [-, thin] (0,0.08) -- (0,-1) -- (-1,-1) -- (-1,0.08) -- cycle;
\draw (0,0.08) node[above right]{{\tiny $(0,\frac{1}{9})$}} node{{\tiny $\bullet$}};
\draw (0.8,0.8)  node{};
\draw (0.8,-0.8)  node{};
\draw (-0.8,-0.8)  node{};
\draw (-0.8,0.8)  node{};
\draw [-, thick] (0,0.08) -- (0,1);
\draw [-, thick] (0,0.08) -- (-1,0.08);
\draw [-, thick] (0,0.08) -- (0,-1);
\draw [-, thick] (0,0.08) -- (1,0.08);
\end{tikzpicture} \\
{\small tail fan} & & {\small slice over $[0:1]$} & & slice over $[1:1]$ & & slice over $[2:3]$
\end{tabular} 
\end{center}

\ \\

The figures above represent the colored divisorial fan of a complete rational horospherical variety $X$ of complexity one with general orbit $G/H$. We only mention in the figures the non-trivial slices and the tails of the colored polyhedral divisors. The dark gray boxes correspond to polyhedral divisors defined over $\P^{1}$. The two colors of $X$ map to the vectors $e_1$, $e_2$ of the canonical basis via the map $\varrho: \F_X \to M$ defined by \eqref{varrho}. The mark in the diagram of tail fan is the color that we take into account.

Applying Theorem \ref{cox} and \cite[Ex. 3.2.3.10]{ADHL15} we obtain that the Cox ring of the variety $X$ is
$$R(X) \cong \frac{k\left[s_{1}, s_{2}, s_{3},t_1,t_2,t_3,t_4, x_1,x_2,x_3,z_1,z_2,z_3\right]}{\left(t_4^9-2t_3^9-t_1^2t_2^4,\, x_1z_1+x_2z_2+x_3z_3\right)}.$$
Moreover, from \cite[Cor. 2.12]{LT16} we determine the class group of $X$:
\begin{align*}
\Cl(X) \cong \; &\Z^{10} / \langle   f_{10}-2f_4-4f_5, \; f_{10}-9f_6, \; f_{10}-9f_7, f_8-f_4+f_5-f_3,\\ 
& f_9+f_4-f_5+f_6+f_7+f_1-f_2 \rangle \\
 \cong \; &\Z^5 \times \Z/(2) \times \Z/(9),
\end{align*} 
where we denote by $f_l$ the $l$-th vector of the canonical basis of $\Z^{10}$. 
The $\Cl(X)$-degrees of the variables can be chosen as follows.
\begin{center}
\begin{tabular}{|c|c|c|c|c|c|c|c|c|c|}
\hline
variable & $s_1$ & $s_2$ & $s_3$ & $t_1$ & $t_2$ & $t_3$ & $t_4$& $x_i$ & $z_j$ \\ \hline
degree & $f_1$ & $f_2$ & $f_3$ & $f_4$ & $f_5$ & $f_6$ & $f_7$& $f_8$ & $f_9$ \\
\hline
\end{tabular}
\end{center}
\ \\
\end{example}

\section{Proof of the main result}
We keep the same notation as in the preceding sections. We start by looking for  a natural covering of $X$ by affine open subsets. To do this, we first consider the $B$-stable dense open subset
$$X_0(\D,\F):=X(\D,\F) \setminus  \left(\bigcup_{D \in \F_{X} \setminus \F} D \right),$$
where $(\D,\F)\in\E$.
This subset is affine and intersects every $G$-orbit of $X(\D,\F)$ (compare with \cite[Lem. 2.1]{LT16}). 
Then we consider the \emph{Weyl group} $W = N_{G}(T)/T$
corresponding to a maximal torus $T$ of $B$. Note that 
for any lift $\tilde{w} \in N_{G}(T)$ of an element $w\in W$ and any $B$-stable subset $Z\subseteq X$, the subset $\tilde{w}\cdot Z \subseteq X$ does not depend on the choice of $\tilde{w}$ but only on $w$. 
Hence we may write
$$X_{0}^{w}(\D, \F):=\tilde{w} \cdot X_{0}(\D, \F).$$
The proof of the following lemma is inspired from \cite[Prop. 3.7]{AKP15}.
\begin{lemma} \label{cover}
The affine dense open subsets $X_{0}^{w}(\D,\F)$ cover $X$ when $(\D,\F)$ and $w$ run through $\E$ and $W$ respectively. 
\end{lemma}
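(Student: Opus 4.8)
The plan is to reduce the statement to a covering property for the $\T$-variety $Y$ (equivalently, for the divisorial fan $\E'$ over $\P^1$), exploiting the parabolic induction structure $\tilde{X}\cong G\times^P Y$ together with the Bruhat decomposition of $G$. Concretely, I would argue as follows.

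\medskip
\noindent\textbf{Step 1: Reduce to covering $X(\D,\F)$ for $(\D,\F)\in\E$.}
By construction, the $G$-stable dense open subsets $X(\D,\F)$, for $(\D,\F)$ running through $\E$, cover $X$: this is part of the compatibility conditions making $\E$ a colored divisorial fan (the tails and slices glue to all of $X$, cf.\ \cite[\S 1]{LT16} and the translation of \cite[Th. 5.6]{AHS08}). Hence it suffices to show that, for a fixed $(\D,\F)\in\E$, the open subsets $X_0^w(\D,\F)=\tilde{w}\cdot X_0(\D,\F)$ cover $X(\D,\F)$ as $w$ runs through $W$.

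\medskip
\noindent\textbf{Step 2: A single $B$-chart meets every $G$-orbit.}
As recalled in the excerpt (compare \cite[Lem. 2.1]{LT16}), $X_0(\D,\F)$ is affine, $B$-stable, and meets every $G$-orbit of $X(\D,\F)$. Therefore $G\cdot X_0(\D,\F)=X(\D,\F)$, i.e.\ the $G$-translates of $X_0(\D,\F)$ cover $X(\D,\F)$. It remains to replace the translates by $g\in G$ with the finitely many translates by representatives $\tilde w$ of $w\in W$.

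\medskip
\noindent\textbf{Step 3: Pass from $G$-translates to $W$-translates via Bruhat decomposition.}
Write $g = b_1 \tilde{w} b_2$ with $b_1,b_2\in B$, $w\in W$, using the Bruhat decomposition $G=\bigsqcup_{w\in W} B\tilde{w}B$. Since $X_0(\D,\F)$ is $B$-stable we have $g\cdot X_0(\D,\F) = b_1\tilde w b_2\cdot X_0(\D,\F) = b_1\tilde w\cdot X_0(\D,\F) = b_1\cdot X_0^w(\D,\F)$. Now I want to further absorb $b_1$. The key point is that each $X_0^w(\D,\F)=\tilde w\cdot X_0(\D,\F)$ is $\tilde w B\tilde w^{-1}$-stable, and $B$ together with all the $\tilde w B\tilde w^{-1}$ generate $G$; more precisely, for a point $x\in X(\D,\F)$ lying in some $g\cdot X_0(\D,\F)$, one shows $x\in \tilde w\cdot X_0(\D,\F)$ for the $w$ occurring in the Bruhat cell of $g$ — here one uses that $X_0(\D,\F)$, being obtained from $X(\D,\F)$ by removing the colors not in $\F$ (which are $B$-stable but not $G$-stable divisors), is characterized $B$-equivariantly so that membership is detected after applying $b_1$. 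Explicitly, $b_1^{-1}x \in X_0^w(\D,\F)$ and since $b_1^{-1}\in B$ stabilizes the complement of the colors and the $B$-action permutes nothing new, one gets $x\in X_0^w(\D,\F)$ as well. This is exactly the mechanism in \cite[Prop. 3.7]{AKP15}.

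\medskip
\noindent\textbf{Main obstacle.}
The delicate step is Step 3: making rigorous that $B$-stability of $X_0(\D,\F)$ upgrades $g\cdot X_0(\D,\F)$-membership to $\tilde w\cdot X_0(\D,\F)$-membership for the Bruhat-cell index $w$ of $g$. One must check that removing the colors $\F_X\setminus\F$ does not destroy this: the complement $\bigcup_{D\in\F_X\setminus\F} D$ is a union of $B$-stable (non-$G$-stable) prime divisors, so $b_1$ preserves it, and hence $b_1$ preserves $X_0(\D,\F)$ inside $X(\D,\F)$; combined with the fact that $X(\D,\F)$ itself is $G$-stable, one concludes $g\cdot X_0(\D,\F)\subseteq \tilde w\cdot X(\D,\F)\cap (\text{complement of colors}) = X_0^w(\D,\F)$ after the reduction $g\cdot X_0(\D,\F)=b_1\cdot X_0^w(\D,\F)$ and invariance of $X_0^w(\D,\F)$ under $b_1\in B$ — for which one needs that $\tilde w\cdot X_0(\D,\F)$ is stable under $B$, i.e.\ that the colors defining it are permuted among themselves by $\tilde w B \tilde w^{-1}$ in a way compatible with $B$. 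I would handle this by unwinding the color bookkeeping exactly as in the proof of \cite[Lem. 2.1]{LT16} and \cite[Prop. 3.7]{AKP15}, after which the covering statement is immediate.
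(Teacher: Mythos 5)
Your Steps 1 and 2 are fine and agree with the paper's setup, but Step 3 contains a genuine gap, and it is precisely the step that carries all the content of the lemma. From $g=b_1\tilde w b_2$ and $B$-stability of $X_0(\D,\F)$ you correctly get $g\cdot X_0(\D,\F)=b_1\cdot X_0^w(\D,\F)$, but you cannot absorb $b_1$: the set $X_0^w(\D,\F)=\tilde w\cdot X_0(\D,\F)$ is stable under $\tilde wB\tilde w^{-1}$, not under $B$ (the translated colors $\tilde w\cdot D$ are no longer $B$-stable divisors), so the identity $B\tilde w\cdot X_0(\D,\F)=\tilde w\cdot X_0(\D,\F)$ you need is false in general. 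You acknowledge this ("for which one needs that $\tilde w\cdot X_0(\D,\F)$ is stable under $B$") but the proposed fix --- that $b_1$ "stabilizes the complement of the colors" --- only applies to the colors of $X$ itself, not to their $\tilde w$-translates. More fundamentally, the statement that $W$-translates of a $B$-stable open chart suffice is \emph{not} a formal consequence of the Bruhat decomposition plus $B$-stability; even for a flag variety, the fact that $G/P=\bigcup_{w\in W}w\cdot C_0$ ($C_0$ the big cell) requires the nontrivial inclusion $\tilde v^{-1}B\tilde v\subseteq B^-B$, i.e.\ an argument about how Bruhat cells sit inside translated big cells.

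The paper closes this gap by working orbit by orbit instead of with group elements: every $G$-orbit $O\subseteq X$ is itself a horospherical homogeneous space (\cite[Rem.~7.2]{Tim11}), and such a space satisfies $O=\bigcup_{w\in W}w\cdot O_0$ with $O_0$ its open $B$-orbit --- the extension to horospherical spaces of \cite[Prop.~1.2.1]{Bri05}. Since $O\cap X_0(\D,\F)$ is a nonempty (hence dense) $B$-stable open subset of $O$, it contains $O_0$, so $O\subseteq\bigcup_w X_0^w(\D,\F)$. If you want to keep your Bruhat-theoretic framing, you must import this orbit-level covering fact; as written, your Step 3 does not go through.
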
 
\begin{proof}
We may assume that $\E=\{(\D,\F)\}$. 
Let $X':=\bigcup_{w \in W} X_0^w(\D,\F)$, and let
$O=G.x$ be any $G$-orbit in $X$. By \cite[Rem. 7.2]{Tim11}, the homogeneous space $O$ is horospherical. Therefore $O=\bigcup_{w \in W} w \cdot O_{0}$, where $O_{0}$ denotes the open $B$-orbit of $O$; indeed, this result is classical for flag varieties (see e.g. \cite[Prop. 1.2.1]{Bri05}), and it extends easily to horospherical homogeneous spaces. Since $O \cap X_0$ is a $B$-stable dense open subset, we have $O_{0}\subseteq O \cap X_0$. It follows that $O \cap X'$ contains all the $w \cdot O_{0}$, where $w  \in W$, and thus $O \subseteq X'$. We conclude that $X=X'$. 
\end{proof}
The torus $\T$ identifies
naturally with the group of $G$-equivariant automorphisms of $G/H$. The trivial action of $\T$ on $\P^1$ induces an action of $\T$ on the field of rational functions $k(\P^1 \times G/H)$. We will see that this action, in turn, induces a $\T$-action on $X$. For basic notions on invariant valuations we refer to \cite[Ch. 4]{Tim11}.
\begin{lemma}  \label{Taction}
The $\T$-action on $k(\P^1 \times G/H)$ induces a regular $\T$-action on $X$ such that each affine dense open subset $X_0^w(\D,\F)$ is $\T$-stable. 
\end{lemma}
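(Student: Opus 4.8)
The plan is to exhibit the $\T$-action first on a suitable "horospherical chart" and then glue. First I would recall that $\T = P/H$ acts on $G/H$ on the right by $G$-equivariant automorphisms, and hence on $\P^1 \times G/H$ (trivially on the first factor); this yields the $\T$-action on $k(\P^1 \times G/H) = k(X)$ in the statement. The key observation is that this $\T$-action commutes with the $G$-action and, on the field level, preserves the subfield $k(Y)$ corresponding to the $\T$-variety $Y \subseteq \tilde X$ appearing in the parabolic induction $\tilde X = G \times^P Y$: indeed the rational quotient map $\psi' \circ \pi : \tilde X \dashrightarrow G/P$ is $\T$-invariant (as $\T$ acts by automorphisms of the $\T$-bundle $G/H \to G/P$), so $\T$ acts on $Y$ and this is exactly the $\T$-variety structure recorded by the divisorial fan $\E'$. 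Thus $\T$ acts regularly on $\tilde X$ via $g \cdot y \mapsto g \cdot (t\cdot y)$, which is well defined precisely because the $\T$-action on $Y$ commutes with the $P$-action used to form $G \times^P Y$.

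Next I would push the action down through the discoloration morphism $\pi : \tilde X \to X$. Since $\pi$ is a proper birational $G$-morphism and the $\T$-action on $\tilde X$ commutes with the $G$-action, it suffices to know that $\pi$ contracts $\T$-stable subvarieties to $\T$-stable subvarieties; but $\pi$ is determined by combinatorial data (which colors of $X$ get "discolored"), and the exceptional locus is a union of $G$-stable — hence $\T$-stable, as the actions commute — subvarieties. A cleaner route: $\pi$ induces the identity on function fields, and the $\T$-action on $k(X)$ preserves each local ring $\O_{X,Z}$ of a prime divisor (one checks this valuation-theoretically: $\T$ acts trivially on the set of $G$-invariant valuations and the remaining prime divisors are pulled back from $G/P$ where $\T$ acts trivially), so by normality the birational $\T$-action on $X$ has no points of indeterminacy in codimension one; then one uses properness/completeness together with the classical fact (e.g. \cite[Ch. 4]{Tim11}) that a birational action of a torus on a complete normal variety, regular in codimension one and commuting with an action having only finitely many orbits, extends to a regular action. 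I expect the cleanest formulation is: transport the regular $\T$-action from $\tilde X$ to $X$ using that $\pi_* \O_{\tilde X} = \O_X$ and that $\pi$ is $\T$-equivariant on the dense open where both are defined, invoking Lemma~\ref{cover} to reduce to the affine charts.

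On the affine charts the claim "$X_0^w(\D,\F)$ is $\T$-stable" should then follow from describing these charts intrinsically. The chart $X_0(\D,\F)$ is the affine $B$-chart meeting every $G$-orbit of $X(\D,\F)$; under parabolic induction it corresponds to an affine $\T$-stable open of $Y$ cut out by the polyhedral divisor $\D$, and affineness plus the relation to $k[Y]^{(B)}$ identifies it with $\Spec$ of an $M$-graded ring inside $k(Y)$, manifestly $\T$-stable. For the Weyl translates $X_0^w(\D,\F) = \tilde w \cdot X_0(\D,\F)$ one uses that $\tilde w \in N_G(T) \subseteq G$ and the $G$- and $\T$-actions commute, so $\T$-stability of $X_0(\D,\F)$ forces $\T$-stability of each translate. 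The main obstacle I anticipate is the extension step: carefully justifying that the a priori only rational $\T$-action on $X$ is actually regular everywhere. I would handle this either by reducing via Lemma~\ref{cover} to the affine charts (where regularity is visible from the explicit $M$-grading coming from the polyhedral divisor and parabolic induction) or by the general extension principle for torus actions on complete normal varieties; the affine-chart argument is more self-contained and is, I expect, the intended one since it is the route taken for spherical varieties of minimal rank in \cite[Prop. 3.7]{AKP15}.
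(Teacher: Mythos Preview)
Your overall strategy --- obtain the $\T$-action on $\tilde X = G\times^{P} Y$ via parabolic induction and then descend along the discoloration $\pi:\tilde X\to X$ --- is a natural geometric route, but it is not the one the paper takes, and your descent step has a genuine gap. The paper never passes through $\tilde X$. It works directly on $X$ via the explicit description
\[
k[X_0^w(\D,\F)] \;=\; \bigl(k(\P^1)\otimes_k k[w\cdot O_0]\bigr)\;\cap\;\bigcap_{D\in\F}\O_{\nu_{w\cdot D}}\;\cap\;\bigcap_{\nu\in\V}\O_\nu
\]
as a subring of $k(X)$, and checks that each piece is $\T$-stable. For the valuation rings this is exactly the observation you make parenthetically --- every $G$-valuation is $\T$-invariant because $\T$ acts on $B$-eigenfunctions through characters and a $G$-valuation is determined by its values on such functions --- but the paper applies it \emph{directly} to conclude that $k[X_0^w]$ is a $\T$-stable subalgebra, rather than feeding it into an extension principle. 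Once the action is regular on $X_0(\D,\F)$, the biregular locus is a $G$-stable open meeting every $G$-orbit, hence equals $X$; no reference to $\tilde X$, to $\pi$, or to a general extension theorem is needed.

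The gap in your argument is precisely the descent. Regularity of the $\T$-action on $\tilde X$ does not formally imply regularity on $X$: a birational torus action can become regular on a blow-up while remaining non-regular downstairs, even when it commutes with another group action. Your suggestion via $\pi_*\O_{\tilde X}=\O_X$ would require $\pi^{-1}\bigl(X_0^w(\D,\F)\bigr)$ to be $\T$-stable in $\tilde X$, which you have not established. More seriously, your claim that $X_0(\D,\F)$ ``under parabolic induction corresponds to an affine $\T$-stable open of $Y$'' only holds for the toroidal chart in $\tilde X$; when $\F\neq\emptyset$ the affine chart $X_0(\D,\F)\subseteq X$ is \emph{not} of the form (open of $G/P$) $\times$ (open of $Y$), so the $M$-grading argument you sketch does not apply to it as written. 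The valuation-theoretic description above is exactly what stands in for parabolic induction once colors are present, and it is the core of the paper's proof.
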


\begin{proof}
Without loss of generality, we may assume that $\E=\{(\D,\F)\}$.
Since $\T$ is connected and its action on $G/H$ commutes with the $G$-action, it preserves the open subset $w\cdot O_{0}$ and the irreducible components of its complement in $G/H$. Let $\nu$ be any $G$-valuation of $k(X)$, and let $f \in k(X)$ be a $B$-eigenfunction. Since all $B$-invariant functions in $k(G/H)$ are constant, there exists a character $\chi:\T \to k^\times$ such that $t \cdot f=\chi(t) f$ for all $t \in \T$, and thus $\nu(t \cdot f)=\nu(f)$. A $G$-valuation of $k(X)$ is determined by its values on the $B$-eigenfunctions \cite[Cor. 19.13]{Tim11}, hence we see that every $G$-valuation of $k(X)$ is also a $\T$-valuation of $k(X)$. 
Let $\V$ be the set of all $G$-valuations of $k(X)$ corresponding to proper $G$-stable closed subvarieties of $X$ which are maximal for the inclusion. By \cite[\S 13]{Tim11}, the coordinate ring of $X_0^w:=X_0^w(\D,\F)$ can be expressed inside $k(\P^1 \times G/H)$ as
\begin{equation} \label{combi chart}
 k[X_0^w]= (k(\P^1) \otimes_{k} k[w \cdot O_{0}]) \cap \left( \bigcap_{D \in \F} \O_{\nu_{w \cdot D}} \right) \cap \left( \bigcap_{\nu \in \V} \O_\nu \right),
 \end{equation}
where $\O_\nu=\{ f \in k(X)^\times \ |\  \nu(f) \geq 0\} \cup \{0\}$.
For all $w \in W$, it follows from the discussion above that $k(\P^1) \otimes_{k} k[w \cdot O_{0}]$, the local rings $\O_{\nu_{w \cdot D}}$ with $D \in \F$, and $\O_\nu$ with $\nu \in \V$ are all $\T$-stable. We deduce that the natural $G$-equivariant birational $\T$-action on $X$ is biregular on $X_0^w\subseteq X$. Since the locus where this action is biregular is a $G$-stable dense open subset of $X$, the birational $\T$-action on $X$ is biregular everywhere.
\end{proof}

Our next goal is to describe the $\T$-action on $X$ via the language of divisorial fans. Colors of $X$ are naturally represented as elements of the dual lattice $M^\vee$ as follows: For the natural action of $B$ on $k(X)$, the lattice $M$ identifies with the lattice of $B$-weights of the $B$-algebra $k(X)$. For every nonzero $B$-eigenvector $f\in k(X)$ of weight $m\in M$ and every color $D \in \F_X$, we put   
\begin{equation}  \label{varrho}
\langle m,\varrho(D)\rangle  = v_D(f),
\end{equation}
where $v_D$ is the valuation associated with $D$. Since the value $\varrho(D)$ does not depend on the choice of $f$, we obtain a map $\varrho: \F_X \to M^\vee$. We recall that the set $\F_X$ of colors of $X$ is in one-to-one correspondence with the set $\F_{G/P}$ of Schubert divisors of $G/P$. 
For all $s \in \P^1$ and $D \in \F_{G/P}$, we let $Z_s:=\{s\}\times G/P$ and $Z_D:=\P^1 \times D$. 
For all $w \in W$ and $(\D,\F) \in \E$, we define a new polyhedral divisor on $\P^1 \times G/P$ by 
$$ \QQ(w, \D,\F):= \sum_{D \in \F_{X}}\left(\varrho(D) + \sigma\right)\cdot [Z_D] + \sum_{s \in \P^1}\D_s \cdot [Z_s]+ \sum_{D \in \F_{X} \setminus \F} \emptyset \cdot [w \cdot Z_D],$$
where we denote the empty coefficients of $\QQ(w, \D,\F)$ by $\emptyset$. The tail of $\QQ(w, \D,\F)$ does not depend on $w$ and coincides with the tail $\sigma=\omega^\vee$ of $\D$.
We denote by  $\SS(\E)$ the finite set formed by the polyhedral divisors $\QQ(w, \D,\F)$ when $(\D,\F) \in \E$ and $w \in W$. Here we refer to \cite{Kno91} for the Luna--Vust theory of spherical embeddings.

\begin{proposition} \label{Tvariety}
The set $\SS(\E)$ generates a divisorial fan of $X$ as a $\T$-variety and $\QQ(w, \D,\F)$ is a polyhedral divisor of the local chart 
$X_{0}^{w}(\D,\F)$ of $X$. 
\end{proposition}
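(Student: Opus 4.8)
The plan is to verify that the proposed data $\SS(\E)$ satisfies the axioms of a divisorial fan (in the sense of \cite[Def. 5.1]{AHS08}) and that it encodes exactly the $\T$-variety structure on $X$ described in Lemma \ref{Taction}. I would proceed chart by chart. First I would fix $(\D,\F)\in\E$ and $w\in W$ and show that $\QQ(w,\D,\F)$ is a polyhedral divisor on $\P^1\times G/P$ in the sense of \cite[Def. 2.7]{AH06}: its tail is the cone $\sigma=\omega^\vee$, all but finitely many coefficients equal $\sigma$, and the coefficients $\varrho(D)+\sigma$, $\D_s$, and the empty polyhedra on $w\cdot Z_D$ for $D\in\F_X\setminus\F$ are either polyhedra with tail $\sigma$ or the empty set; properness on the ample/semiample locus follows because $\P^1\times G/P$ is projective and the only coefficients carrying a non-trivial \emph{vertex part} sit over the $\P^1$-factor. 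The crucial point, and the one I expect to be the main obstacle, is the identification of sections: I must show that for each lattice point $m\in\omega\cap M$ the module $\Gamma(\P^1\times G/P,\O(\QQ(w,\D,\F)(m)))$ is isomorphic, compatibly with multiplication, to the weight-$m$ piece of $k[X_0^w(\D,\F)]$ as given by the Luna--Vust formula \eqref{combi chart}.

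To carry this out I would decompose the intersection in \eqref{combi chart} according to the type of valuation. The factor $k(\P^1)\otimes_k k[w\cdot O_0]$ corresponds to the ``horizontal'' part of the $\T$-variety, i.e. to the coefficients over the Schubert divisors $Z_D$ (through the map $\varrho$ of \eqref{varrho}, which translates the valuation $v_D$ on $B$-eigenfunctions into the linear form $\langle\,\cdot\,,\varrho(D)\rangle$) and over the points $w\cdot Z_D$ for $D\notin\F$ (where the empty coefficient records that the corresponding color is removed, as in the definition of $X_0(\D,\F)$). The local rings $\O_{\nu_{w\cdot D}}$ for $D\in\F$ and $\O_\nu$ for the maximal $G$-valuations $\nu\in\V$ correspond to the coefficients $\D_s$ over the points $s\in\P^1$, since by the combinatorial description of \cite[\S 1]{LT16} these $G$-valuations are exactly those read off the slices $\E_s$ of the polyhedral divisor $\D$. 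Matching the inequalities $\nu(f)\ge0$ with the condition $\div(f)+\QQ(w,\D,\F)(m)\ge0$ for a $B$-eigenfunction $f$ of weight $m$, separately over the $\P^1$-factor and the $G/P$-factor, is where the bookkeeping is heaviest; here I would lean on the fact established in the proof of Lemma \ref{Taction} that every $G$-valuation of $k(X)$ is a $\T$-valuation, so that the $M$-grading is compatible with all the local rings appearing.

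Next I would check the compatibility conditions that turn the finite set $\SS(\E)$ into a genuine divisorial fan: for any two polyhedral divisors $\QQ_1=\QQ(w_1,\D_1,\F_1)$ and $\QQ_2=\QQ(w_2,\D_2,\F_2)$ in $\SS(\E)$ there must be a common ``intersection'' $\QQ_1\cap\QQ_2$ in $\SS(\E)$ which is a face of both, slice-wise. This reduces, over each prime divisor $Z$ of $\P^1\times G/P$, to checking that the slices $\SS(\E)_Z$ form a polyhedral subdivision of $M_\Q$; over the points $s\in\P^1$ this is inherited from the fact that $\E$ (equivalently $\E'$) is a colored divisorial fan, hence its slices $\E_s$ subdivide $M_\Q$, and over the Schubert divisors $Z_D$ the slice is the single polyhedron $\varrho(D)+\sigma$ (constant in $w$, as noted after the definition of $\QQ(w,\D,\F)$), while over the $w\cdot Z_D$ with $D\notin\F$ one has empty slices that glue trivially. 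Finally, from Lemma \ref{cover} the affine charts $X_0^w(\D,\F)$ cover $X$, and by construction the $\T$-variety glued from $\SS(\E)$ has exactly these charts with the prescribed coordinate rings, so by \cite[Th. 5.6]{AHS08} the divisorial fan generated by $\SS(\E)$ describes $X$ as a $\T$-variety, which is the assertion of Proposition \ref{Tvariety}.
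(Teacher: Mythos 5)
Your overall architecture coincides with the paper's: verify chart by chart that $\QQ(w,\D,\F)$ computes $k[X_0^w(\D,\F)]$ by matching the Luna--Vust formula \eqref{combi chart} against the Altmann--Hausen section formula, and then check the gluing conditions (which the paper actually leaves to the reader). However, the matching itself --- the step you rightly single out as the crux --- is set up incorrectly. You assign the local rings $\O_{\nu_{w\cdot D}}$ for $D\in\F$, together with the $\O_\nu$ for $\nu\in\V$, to the coefficients $\D_s$ over the points $s\in\P^1$, asserting that ``these $G$-valuations are exactly those read off the slices $\E_s$''. The color valuations $\nu_{w\cdot D}$ are not $G$-valuations and are not encoded in the slices over $\P^1$: they restrict trivially to $k(\P^1)$ and are precisely the valuations of the divisors $\P^1\times D$, so they account for the coefficients $\varrho(D)+\sigma$ over the Schubert divisors $Z_D$. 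The paper handles this by absorbing them into the first factor, rewriting $(k(\P^1)\otimes_k k[O_0])\cap\bigcap_{D\in\F}\O_{\nu_D}=k(\P^1)\otimes_k k[O']$ with $O'$ the complement of the colors outside $\F$ (Equation \eqref{eq3}); the condition $f_2\in k[O']$ then translates exactly into $\ord_{Z_D}(f_1\otimes f_2)+\langle\varrho(D),m\rangle\geq 0$ for $D\in\F$. Conversely, the factor $k(\P^1)\otimes_k k[w\cdot O_0]$ alone imposes no condition along any color, so it cannot be the source of those coefficients, contrary to your first assignment. Carried out as written, your bookkeeping would not close.

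A second, smaller point: for the valuations $\nu\in\V$ you need more than the fact from Lemma \ref{Taction} that every $G$-valuation is a $\T$-valuation. To convert $\nu(f_1\otimes f_2)\geq 0$ into the inequality $\ord_{Z_s}(f_1\otimes f_2)+\min_{\gamma\in\D_s}\langle\gamma,m\rangle\geq 0$ one needs that the restriction of a $G$-valuation to the weight-$m$ part of $k[O_0]$ is of the form $\beta\cdot\ord_s(f_1)+\langle\gamma,m\rangle$ with $(\beta,\gamma)$ ranging over the cone generated by $\{0\}\times\sigma$ and $\{1\}\times\D_s$; the paper extracts this linearity from \cite[Th. 1.1]{AKP15} and \cite[Prop. 3.13]{PS11}. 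With these two corrections your plan becomes the paper's proof.
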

\begin{proof}
Let us show that $\QQ(w, \D,\F)$ describes the $\T$-variety $X_{0}^{w}(\D,\F)$.
Without loss of generality, we may assume that $w$ is the neutral element.
Let $O_{0}$ be the open $B$-orbit of $G/H$. Note that $k[O_{0}]$
is naturally equipped with 
an $M$-graduation arising from the $\T$-action on $G/H$. Given any toroidal $G$-equivariant embedding $X'$ 
of $G/H$, the prime $G$-divisors of $X'$ are exactly the $\T$-divisors
that are in the complement of $G/H$. Comparing the divisorial fans
for the $\T$-actions on $G/H$ and $X'$ given in \cite[Th. 1.1]{AKP15} and using \cite[Prop. 3.13]{PS11}, we deduce that for any $G$-valuation $v$ of $k(G/H)$ restricted on $k[O_{0}]$, there exists a unique $\gamma\in M^{\vee}_{\Q} = \Q\otimes_{\Z}M^{\vee}$ such that
$v(f) = \langle \gamma, m\rangle $
for any nonzero element $f$ of degree $m\in M$, and vice-versa. In the following we will denote by $v_{\gamma}$ the $G$-valuation attached to $\gamma$. 

Let $O'$ be  the complement in $G/H$ of the union of the colors of $\F_X \setminus \F$.
With the same notation as in the proof of Lemma \ref{Taction}, Equation \eqref{combi chart} yields
\begin{equation} \label{eq3}
k[X_0(\D,\F)]=(k(\P^1) \otimes_{k} k[O']) \cap \left(\bigcap_{v \in \V}\O_v \right).
\end{equation} 
For any $s\in\P^{1}$, we denote by $C(\D_{s})$ the cone in $\Q\oplus M^{\vee}_{\Q}$ generated by the union of $\{0\}\times \sigma$ and $\{1\}\times \D_{s}$.  Considering $f_{1}\in k(\P^{1})$ and $f_{2}\in k[O']$ of degree $m\in M$, we have the following equivalences: $$f_{1}\otimes f_{2} \in k[X_0(\D,\F)] \setminus \{0\} \Leftrightarrow \nu(f_{1}\otimes f_{2})\geq 0 \text{ for all } \nu\in \V$$
$$\Leftrightarrow \beta\cdot \ord_{s}(f_{1}) + v_{\gamma}(f_{2})\geq 0 \text{ for all } s\in \P^{1} \text{ and } (\beta, \gamma)\in C(\D_{s})$$
$$\Leftrightarrow m \in   \sigma^\vee\cap M \text{ and }\div(f_{1}\otimes f_{2}) + \QQ(w, \D,\F)(m)\geq 0.$$
The first equivalence is a rephrasing of Equality \eqref{eq3}.
The second equivalence follows from the descriptions of $\V$
in terms of $\D$; compare with \cite[\S\S 1.2.3 and 1.3.3]{LT16}. 
The condition $f_{1}\otimes f_{2}\in k(\P^1) \otimes_k k[O']$
gives 
$$\ord_{Z_{D}}(f_{1}\otimes f_{2}) + \langle \varrho(D), m \rangle \geq 0$$
for all $D\in \F$ (which holds by the description of $O'$ in terms 
of divisorial fans). Moreover the conditions $$\beta\cdot \ord_{s}(f_{1}) + v_{\gamma}(f_{2}) = \beta\cdot \ord_{s}(f_{1}) + \langle \gamma, m\rangle \geq 0,$$ 
for all $(\beta, \gamma)\in C(\D_{s})$ and $s\in \P^{1}$, are translated into the conditions $m\in\sigma^{\vee}\cap M$ and
$$\ord_{Z_{s}}(f_{1}\otimes f_{2}) + \min_{\gamma\in \D_{s}}\langle \gamma, m\rangle\geq 0,$$ yielding the last equivalence. 

The verification that $\SS(\E)$ generates a divisorial fan consists in describing the 
polyhedral divisors associated with the intersections of the $X_0^{w}(\D,\F)$'s
and is left to the reader.
\end{proof}

\begin{proof}[Proof of Theorem \ref{cox}] 
Starting with the set $\SS(\E)$ generating a divisorial fan of the $\T$-variety $X$, we investigate the 
Cox ring of $X$ by using \cite[Th. 4.8]{HS10}. To do this, we apply \cite[Prop. 3.13]{PS11} to determine the prime $\T$-divisors on $X$ that are described by some
extremal rays of tails of elements of $\SS(\E)$ or by some vertices of non-trivial slices. From the study of $G$-invariant valuations in the proof of Proposition
\ref{Tvariety}, we know that the prime $\T$-divisors given by the extremal rays coincide with the prime $G$-divisors parameterized by the set $\Ray(\E)$. 
The remaining prime $\T$-divisors are given either by the vertices of slices of $\SS(\E)$ over the $Z_{y_{i}}$'s, and then they coincide with the prime $G$-divisors given by the pairs $(y_{i}, v)$, where $v\in \E_{y_{i}}$ and $1\leq i\leq r$, or else they are given by the vertices of slices of $\SS(\E)$ over the $Z_{D}$'s.  
The latter will not play any role in the description of $R(X)$ since their vertices $\varrho(D)$ are lattice vectors.
Therefore, by \cite[Th. 4.8]{HS10}, we obtain that $R(X)$ is isomorphic to
$$k \left[S_{\rho} ;\, \rho\in\Ray(\E) \right] \otimes_{k}  R(\P^{1}\times G/P)\left[T_{(y_{i}, v)} \,;\, (y_{i}, v)\in \Vert(\E)\right]/I,$$
where $I$ is the ideal generated by 
$$-s_{i} + \prod_{v \text{ vertex of } \E_{y_i}}  T_{(y_{i},v)}^{\mu(v)}$$ 
for $1\leq i\leq r$, with $s_i$ the canonical section of $Z_{y_{i}}$ and $\mu(v)$ as in the statement of Theorem \ref{cox}. 
We conclude by using
that $R(\P^{1}\times G/P)$ identifies with $k[T_{0}, T_{1}]\otimes_{k}R(G/P)$; compare with \cite[Lem. 4.2.2.3]{ADHL15}. Also, via this isomorphism, one can take 
$s_i = \alpha_i T_0+\beta_i T_1$, where $y_{i} = [\alpha_i, \beta_i]$. Finally, the $\Cl(X)$-degree of the variables $S_\rho$ and $T_{(y_{i}, v)}$ is given by \cite[Th. 4.8]{HS10}, and we determine the $\Cl(X)$-grading on $R(G/P)$ by using \cite[Prop. 4.7]{HS10}.
\end{proof}

\noindent {\em Acknowledgments.} 
We would like to thank the anonymous referee and Michel Brion for their valuable comments which allowed to improve the presentation.
The first-named author is supported by the University Heinrich Heine of D\"{u}sseldorf. 
The second-named author is grateful to the Max-Planck-Institut f\"{u}r Mathematik of Bonn for the warm hospitality and support provided during the writing of this paper.

\bibliographystyle{alpha}

\end{document}